\title{$\pm$-$P$-adic $L$-functions for $\mathrm{GL_{2n}}$}
\author{by Rob Rockwood}
\date{}
\newtheorem{theorem}{Theorem}[section]
\newtheorem{lemma}[theorem]{Lemma}
\newtheorem{proposition}[theorem]{Proposition}
\theoremstyle{definition}
\newtheorem{definition}[theorem]{Definition}
\newtheorem{remark}[theorem]{Remark}
\newtheorem{corollary}[theorem]{Corollary}
\begin{document}

\maketitle

\begin{abstract}
      We generalise Pollack's construction of plus/minus L-functions to certain cuspidal automorphic representations of $\mathrm{GL_{2n}}$ using the $p$-adic $L$-functions constructed in work of Barrera, Dimitrov and Williams \cite{ChrisShalika}. We use these to prove that the complex $L$-functions of such representations vanish at at most finitely many twists. 
\end{abstract}

\section{Introduction}
Let $f = \sum_{n = 0}^{\infty}a_{n}q^{n}$ be a normalized cuspidal newform of weight $k$ and level $N$ with character $\varepsilon$, and let $p$ be a prime such that $p \nmid N$. Let $\alpha$ be a root of the Hecke polynomial $X^{2} - a_{p}X + p^{k - 1}\varepsilon(p)$ which,
 after fixing an isomorphism $\bar{\mathbb{Q}}_{p} \cong \mathbb{C}$, satisfies $r := v_{p}(a_{p}) < k - 1$, where $v_{p}$ is the $p$-adic valuation on $\mathbb{C}_{p}$ normalized so that $v_{p}(p) = 1$. From this data we can construct an order $r$ locally analytic distribution $L_{p}^{(\alpha)}$ on $\mathbb{Z}_{p}^{\times}$ whose values at special characters interpolate the critical values of the complex $L$-function of $f$ and its twists. The arithmetic of $L_{p}^{(\alpha)}$ is well understood in the case that $f$ is ordinary at $p$ i.e. when $r = 0$, but is more mysterious in the non-ordinary case, since the unbounded growth of $L_p^{(\alpha)}$ means that it does not lie in the Iwasawa algebra, and hence cannot be the characteristic element of an Iwasawa module. 
 
 In \cite{pollack} Pollack provides a solution to this problem in the case that $a_{p} = 0$ by constructing bounded distributions $L^{+}_{p}, L^{-}_{p}$ each of which interpolate half the values of the complex $L$-function of $f$ and its twists. Kobayashi \cite{kob} and Lei \cite{lei} have formulated Iwasawa main conjectures using these `$\pm$ $p$-adic $L$-functions', shown them to be equivalent to Kato's main conjecture and proved one inclusion in these conjectures using Kato's Euler system. The converse inclusion has been proved in many cases by Wan \cite{wan}. 
 
 Now let $\Pi$ be a cuspidal automorphic representation of $\mathrm{GL}_{2n}(\mathbb{A}_{\mathbb{Q}})$. Suppose that $\Pi$ is cohomological with respect to some dominant integral weight $\mu$, and that it is the transfer of a globally generic cuspidal automorphic representation of $\mathrm{GSpin}_{2n + 1}(\mathbb{A}_{\mathbb{Q}})$. Let $p$ be a prime at which $\Pi$ is unramified, and let $\alpha_{1}, \ldots, \alpha_{2n}$ be the Satake parameters at $p$. We call a choice of $\alpha = \prod_{i = 1}^{n}\alpha_{j_{i}}$ a $p$-stabilisation of $\Pi$. When a $p$-stabilisation $\alpha$ is \textit{non-critical} and under some further auxiliary technical assumptions Dimitrov, Januszewski and Raghuram \cite{dimitrov} (ordinary case) and Barrera, Dimitrov and Williams \cite{ChrisShalika} construct a locally analytic distribution $L^{(\alpha)}_{p}$ on $\mathbb{Z}_{p}^{\times}$ interpolating the $L$-values of $\Pi$. If we assume $\alpha$ satisfies a small slope condition then this $p$-stabilisation is non-critical, although this is a stronger condition. We show that that there are at most two choices of $\alpha$ satisfying the small slope condition and thus at most two non-critical slope $L^{(\alpha)}_{p}$ can be constructed from a given $\Pi$. Suppose we have two non-critical slope $p$-adic $L$-functions for a given $\Pi$ and suppose the following condition, which we dub the `Pollack condition', holds:
 \begin{equation} \label{pol}
 \textbf{Pollack condition:} \ \alpha_{n} + \alpha_{n + 1} = 0.
 \end{equation}
 See Remark \ref{rem:1} for examples of $\Pi$ satisfying this condition. 
 
 We prove the following theorem:
 \begin{theorem}
 Let $\alpha$ be as above and let $\mathrm{Crit}(\Pi)$ be the set of critical integers for $\Pi$ defined in Definition \ref{def:1}. There exist a pair of distributions $L_{p}^{\pm}$ of order $v(\alpha) - \#\mathrm{Crit}(\Pi)/2$ satisfying
 $$
    L_{p}^{(\alpha)} = \mathrm{log}_{\Pi}^{+}L^{+}_{p} + \mathrm{log}_{\Pi}^{-}L^{-}_{p},
 $$
 where $\mathrm{log}^{\pm}_{\Pi}$ are distributions depending only on $\mathrm{Crit}(\Pi)$ of order $\#\mathrm{Crit}(\Pi)/2$. If the valuation of $\prod_{i = 1}^{n - 1}\alpha_{i}$ is minimal the distributions $L^{\pm}_{p}$ are bounded. These distributions satisfy the following interpolation property for $j \in \mathrm{Crit}(\Pi)$:
$$
     \int_{\mathbb{Z}_{p}^{\times}}x^{j}\theta(x)L_{p}^{+}(x) = (*) \frac{L(\Pi\otimes\theta, j + 1/2)}{\mathrm{log}_{\Pi}^{+}(x^{j}\theta)}
$$
 for $\theta$ a Dirichlet character of conductor an even power of $p$, and
 $$
    \int_{\mathbb{Z}_{p}^{\times}}x^{j}\theta(x)L_{p}^{-}(x) = (*) \frac{L(\Pi\otimes\theta, j + 1/2)}{\mathrm{log}_{\Pi}^{-}(x^{j}\theta)}
 $$
 for $\theta$ a Dirichlet character of conductor an odd power of $p$, where the $(*)$ are non-zero constants.
 \end{theorem}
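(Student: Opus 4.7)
The plan is to mirror Pollack's original construction, exploiting the Pollack condition $\alpha_n + \alpha_{n+1} = 0$ to relate the $p$-adic $L$-functions attached to the two non-critical slope $p$-stabilisations available. Write $\alpha = \alpha_1\cdots\alpha_{n-1}\alpha_n$ and $\beta = \alpha_1\cdots\alpha_{n-1}\alpha_{n+1}$; then $\beta = -\alpha$, so $v_p(\beta) = v_p(\alpha)$ and by \cite{ChrisShalika} we obtain two locally analytic distributions $L_p^{(\alpha)}, L_p^{(\beta)}$ of the same order $v_p(\alpha)$.

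My first step would be to extract from the interpolation formula in \cite{ChrisShalika} the precise dependence of the local $p$-adic Euler-type factor on the choice of $p$-stabilisation. After cancelling the common contribution of $\alpha_1,\ldots,\alpha_{n-1}$, this factor reduces to a power $\alpha_n^{-m}$ (resp.\ $\alpha_{n+1}^{-m}$) in the conductor exponent $m$ of a ramified Dirichlet character $\theta$ of $p$-power conductor $p^m$. By the Pollack condition the ratio of these two monomials is $(-1)^m$, yielding the \emph{swap identity}
$$ \int_{\mathbb{Z}_p^\times} x^j \theta(x)\, dL_p^{(\beta)}(x) = (-1)^m \int_{\mathbb{Z}_p^\times} x^j \theta(x)\, dL_p^{(\alpha)}(x) $$
for all $j \in \mathrm{Crit}(\Pi)$. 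Consequently $L_p^{(\alpha)} + L_p^{(\beta)}$ vanishes at every twist $x^j\theta$ with $j$ critical and $m$ odd, while $L_p^{(\alpha)} - L_p^{(\beta)}$ vanishes at every such twist with $m$ even.

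Next I would introduce Pollack-type logarithmic distributions $\log_\Pi^\pm$, assembled as products of the basic $\log_p^\pm$ of \cite{pollack} indexed by the critical integers: $\log_\Pi^+$ is arranged to vanish on exactly the twists $x^j\theta$ with $j \in \mathrm{Crit}(\Pi)$ and $m$ odd, and symmetrically for $\log_\Pi^-$. Pollack's analysis of the relevant cyclotomic factors $\Phi_{p^k}$ then gives each of $\log_\Pi^\pm$ order $\#\mathrm{Crit}(\Pi)/2$.

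Setting
$$ L_p^+ := \frac{L_p^{(\alpha)} + L_p^{(\beta)}}{2\log_\Pi^+}, \qquad L_p^- := \frac{L_p^{(\alpha)} - L_p^{(\beta)}}{2\log_\Pi^-}, $$
the theorem reduces to showing these formal quotients are genuine locally analytic distributions of order $v_p(\alpha) - \#\mathrm{Crit}(\Pi)/2$. This divisibility step is the principal technical obstacle: one must upgrade the pointwise vanishing from the swap identity to genuine divisibility in the distribution algebra, with a sharp loss of order. I would handle it by an Amice-Vélu / Vishik style argument applied to the Amice transforms, of the type used by Pollack in the modular case and extended to higher-order logarithms in subsequent work. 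Once this is done, boundedness of $L_p^\pm$ under the minimality hypothesis on $v_p(\prod_{i<n}\alpha_i)$ is immediate, since in that case $v_p(\alpha) = \#\mathrm{Crit}(\Pi)/2$, and the stated interpolation formulae follow by simply rearranging the defining identities.
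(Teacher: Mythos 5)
Your proposal follows the paper's argument step for step: the swap identity $\int x^{j}\theta\, L_{p}^{(\beta)} = (-1)^{m}\int x^{j}\theta\, L_{p}^{(\alpha)}$ from the Pollack condition, the symmetric/antisymmetric combinations $G^{\pm} = \tfrac{1}{2}(L_{p}^{(\alpha)}\pm L_{p}^{(\beta)})$, the products $\mathrm{log}^{\pm}_{\Pi}=\prod_{j\in\mathrm{Crit}(\Pi)}\mathrm{log}^{\pm}_{p,j}$, and division to obtain $L_{p}^{\pm}$. The one imprecision is labelling the divisibility step as Amice--V\'elu/Vishik: the paper (following Pollack) instead invokes Lazard's division theorem on the Amice transforms, and the crucial input for the order estimate on the quotient is the sharp two-sided bound $\limsup_{h}\bigl(v_{\bar{\mathbb{B}}(0,u_{h})}(\mathrm{log}^{\pm}_{\Pi})+\tfrac{\#\mathrm{Crit}(\Pi)}{2}h\bigr)<\infty$ of Lemma \ref{lem:28} — mere membership of $\mathrm{log}^{\pm}_{\Pi}$ in $\mathscr{R}^{+}_{\#\mathrm{Crit}(\Pi)/2}$ (a one-sided $\liminf$ bound) would not by itself control the order of $G^{\pm}/\mathrm{log}^{\pm}_{\Pi}$.
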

  \begin{remark}
  The condition that $\prod_{i = 1}^{n - 1}\alpha_{i}$ be minimal is equivalent to the statement that this $p$-stabilisation is $\mathscr{P}$-ordinary where $\mathscr{P} \subset \mathrm{GL}_{2n}$ is the parabolic subgroup given by the partition $2n = (n - 1) + 2 + (n - 1)$.
  \end{remark}
  As an application we prove the following generalisation of the main result of \cite{dimitrov}:
  \begin{theorem}
  In the case that $L_{p}^{\pm}$ are bounded distributions, the purity weight $w$ is even, and $\mathrm{Crit}(\Pi) \neq \{w/2\}$, we have
$$
    L(\Pi \otimes \theta, (w + 1)/2) \neq 0
$$
for all but finitely many characters $\theta$ of $p$-power conductor.
  \end{theorem}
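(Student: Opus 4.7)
The plan is to combine Weierstrass preparation in the Iwasawa algebra with the interpolation formula of Theorem 1 at a non-central critical integer, following the strategy used in the ordinary case in \cite{dimitrov}.

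Since $L_p^\pm$ are bounded by hypothesis, they correspond to measures on $\mathbb{Z}_p^\times$, that is, elements of the Iwasawa algebra $\Lambda = \mathcal{O}[[\mathbb{Z}_p^\times]]$. Decomposing $\Lambda$ along characters of the prime-to-$p$ torsion of $\mathbb{Z}_p^\times$ expresses each component as a power series ring $\mathcal{O}[[T]]$, in which by Weierstrass preparation every non-zero element has only finitely many zeros in the open unit disk. In particular, if $L_p^\pm \neq 0$, then $\int x^{w/2}\theta\, L_p^\pm$ vanishes for only finitely many $\theta$ of $p$-power conductor of each parity.

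The key step is to show $L_p^\pm \neq 0$, and this is where the hypothesis $\mathrm{Crit}(\Pi) \neq \{w/2\}$ is essential. The set $\mathrm{Crit}(\Pi)$ is an interval of integers symmetric about $w/2$, so strictly containing $\{w/2\}$ forces the existence of some $j_0 \in \mathrm{Crit}(\Pi)$ with $j_0 + 1/2$ in the half-plane of absolute convergence of $L(\Pi\otimes\theta, s)$; the Euler product then gives $L(\Pi\otimes\theta, j_0 + 1/2) \neq 0$ for every finite-order $\theta$. Evaluating the interpolation formula of Theorem 1 at the trivial twist (for $L_p^+$) and at a twist by a character of conductor $p$ (for $L_p^-$) gives
$$\int x^{j_0}\theta\, L_p^\pm \;=\; (*) \frac{L(\Pi\otimes\theta, j_0 + 1/2)}{\log_\Pi^\pm(x^{j_0}\theta)} \;\neq\; 0,$$
provided $\log_\Pi^\pm(x^{j_0}\theta)$ is finite and non-zero; this should follow from the explicit Pollack-type construction of $\log_\Pi^\pm$ in Theorem 1, whose zero locus depends only on $\mathrm{Crit}(\Pi)$. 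Hence $L_p^\pm \neq 0$ in $\Lambda$.

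Combining the two steps with the central-value interpolation formula at $j = w/2$ (valid since $w$ is even, so $w/2$ is an integer) translates the finiteness of zeros of $L_p^\pm$ along the $w/2$-branch into the claimed finiteness of $\theta$ with $L(\Pi \otimes \theta, (w+1)/2) = 0$. The main obstacle is the bookkeeping around $\log_\Pi^\pm$: one must verify both that $\log_\Pi^\pm(x^{j_0}\theta)$ is non-zero at the auxiliary non-central point used to deduce $L_p^\pm \neq 0$, and that the central-value interpolation is not masked by accidental vanishing of $\log_\Pi^\pm$ at $x^{w/2}\theta$. Both should be controlled by the prescribed zero set inherent in the construction of $\log_\Pi^\pm$.
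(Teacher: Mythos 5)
Your plan follows the same route as the paper: reduce to finitely many zeros of $L_p^\pm$ via Weierstrass preparation on each tame branch, establish $L_p^\pm \neq 0$ via the interpolation property at a non-central critical integer together with Jacquet--Shalika non-vanishing in the region of absolute convergence, and then read off the conclusion from the interpolation at $j = w/2$. The paper separates the non-vanishing of $L_p^\pm$ into Proposition~\ref{prop:3} (with Remark~\ref{rmk:1} stressing that the non-vanishing holds branch-by-branch on weight space), and the theorem's proof simply cites it before applying Weierstrass preparation; your inline re-derivation of that proposition is in substance the same argument, invoking the Euler product / Jacquet--Shalika bound $L(s,\Pi\otimes\theta) \neq 0$ for $\mathfrak{Re}(s) \geq w/2 + 1$ at a critical point $j_0 \neq w/2$.

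Two small cautions, both easily fixed. First, the interpolation formula \eqref{eq:4} is only asserted for conductor $p^m$ with $m \geq 1$; at the trivial twist one typically encounters modified Euler factors and potential exceptional zeros, so to establish $L_p^+ \neq 0$ you should twist by a character of conductor $p^2$ (or whatever even power is appropriate to your sign conventions) rather than evaluating at the trivial character. Second, since Weierstrass preparation is applied branch-by-branch over the $p-1$ components of weight space, you need $\mathscr{M}(L_p^\pm)\vert_{\mathbb{B}_\psi} \neq 0$ for \emph{every} tame character $\psi$, not merely $L_p^\pm \neq 0$ globally; this is what Remark~\ref{rmk:1} records, and it comes out of the same Jacquet--Shalika argument provided you allow $\theta$ to have an arbitrary tame part. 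Your last paragraph correctly identifies the need to track the zero locus of $\log_\Pi^\pm$; this is exactly the content of Lemma~\ref{lem:30}, which the paper uses to ensure the denominator in the interpolation formula is non-zero at the evaluation points.
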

\begin{remark}
The assumption on the purity weight is to ensure that the $L$-value is central.
\end{remark}

\textbf{Relation to other work}: Since this paper first appeared in preprint form, Lei and Ray \cite{leiray} have used the results of this paper to formulate an Iwasawa main conjecture for $\Pi$, relating the signed $p$-adic L-functions of Theorem 1.0.1 to signed Selmer groups. They have also generalised the construction of the signed $p$-adic $L$-functions to allow certain cases with $\alpha_{n} + \alpha_{n + 1} \neq 0$, using the theory of Wach modules.
 
\section{$p$-adic Fourier theory} \label{sec:10}

We lay out the relevant theory of continuous functions on $\mathbb{Z}_{p}^{\times}$. The main reference for this section is \cite{colmez}.

Let $L$ be a complete extension of $\mathbb{Q}_{p}$, denote by $\mathscr{C}(\mathbb{Z}_{p}, L)$ the Banach space of continuous functions on $\mathbb{Z}_{p}$ taking values in $L$ and write $\mathrm{LA}(\mathbb{Z}_{p}, L)$ for the subspace of locally analytic functions.

\begin{definition}
Let $r \in \mathbb{R}_{\geq 0}$. Let $f: \mathbb{Z}_{p} \to L \in \mathscr{C}(\mathbb{Z}_{p}, L)$. We say that $f$ is of \textit{order $r$} if there are functions $f^{(i)}: \mathbb{Z}_{p} \to L$ such that if we define
$$
    \varepsilon_{n}(f) = \mathrm{inf}_{\substack{x \in \mathbb{Z}^{\times}_{p} \\ y \in 1 + p^{n}\mathbb{Z}_{p}}}v_{p}\left(f(x + y) - \sum_{i = 0}^{\lfloor r \rfloor}f^{(i)}(x)x^{i}/i!\right), 
$$
then 
$$
    \varepsilon_{n}(f) - rn \to \infty \ \text{as} \ n \to \infty.
$$
We denote the set of such functions by $  \mathscr{C}^{r}(\mathbb{Z}_{p}, L)$.
\end{definition}
The set $  \mathscr{C}^{r}(\mathbb{Z}_{p}, L)$  is a Banach space with valuation given by 
    $$
        v_{\mathscr{C}^{r}}(f) = \mathrm{inf}\left(\mathrm{inf}_{0 \leq j \leq \lfloor r \rfloor, x \in \mathbb{Z}_{p}}(\frac{f^{(i)}(x)}{i !}), \mathrm{inf}_{x,y \in \mathbb{Z}_{p}}\left(\varepsilon_{n}(f) - rv_{p}(y)\right)\right).
    $$
\begin{definition}
\begin{itemize}
\item Define the space of \textit{locally analytic distributions} on $\mathbb{Z}_{p}$ to be the continuous dual of $\mathrm{LA}(\mathbb{Z}_{p}, L)$, denoted $\mathscr{D}(\mathbb{Z}_{p}, L)$.

\item For $r \in \mathbb{R}_{\geq 0}$ define the subspace $\mathscr{D}^{r}(\mathbb{Z}_{p}, L)$ of $\mathscr{D}(\mathbb{Z}_{p}, L)$ of \textit{order $r$ distributions} to be the continuous dual of $\mathscr{C}^{r}(\mathbb{Z}_{p}, L)$.
The space $\mathscr{D}^{0}(\mathbb{Z}_{p}, L)$ of bounded distributions is often referred to as the space of \textit{measures} on $\mathbb{Z}_{p}$.
\end{itemize}
\end{definition}
 For $\mu \in \mathscr{D}^{r}(\mathbb{Z}_{p}, L), f \in \mathscr{C}^{r}(\mathbb{Z}_{p}, L)$ we write
$$
    \mu(f) =: \int_{\mathbb{Z}_{p}}f(x)\mu(x).
$$
The space $\mathscr{D}(\mathbb{Z}_{p}, L)$ is given the structure of an $L$-algebra  via convolution of distributions: 
$$
    \int_{\mathbb{Z}_{p}}f(x) (\mu*\lambda(x)) := \int_{\mathbb{Z}_{p}}\left(\int_{\mathbb{Z}_{p}}f(x + y)\mu(x)\right)\lambda(y).
$$

These distribution spaces will be our main object of our study. Though rather inscrutable by themselves, they become more amenable to study by identifying them with spaces of power series.

For $x \in \mathbb{C}_{p}, a \in \mathbb{R}$, let $\bar{\mathbb{B}}(x, a) = \{y \in \mathbb{C}_{p}: v_{p}(y - x) \geq a\}$, and $\mathbb{B}(x, a) = \{y \in \mathbb{C}_{p}: v_{p}(y - x) > a\}$. We define 
$$
    \mathscr{R}^{+} = \left\{ f = \sum_{n  = 0}^{\infty}a_{n}(f)X^{n} \in L[[X]] : f \ \text{converges on} \ \mathbb{B}(0,0)\right\}.
$$
Let $\ell(n) = \mathrm{inf}\{m : n < p^{m}\}$, and for $r \in \mathbb{R}_{\geq 0}$ define
$$
    \mathscr{R}^{+}_{r} = \{ f = \sum_{n  = 0}^{\infty}a_{n}X^{n} \in L[[X]] : v_{p}(a_{n}) + r\ell(n) \ \text{is bounded below as} \ n \to \infty \}.
$$

Let $u_{h} = (p^{n}(1 - p))^{-1}$ and let $v_{\bar{\mathbb{B}}(0, u_{h})}$ be the valuation on power series which converge on $\bar{\mathbb{B}}(0, u_{h})$ given by 
$$
   v_{\bar{\mathbb{B}}(0, u_{h})}(f) = \mathrm{inf}_{n}\{v_{p}(a_{n}): f(X) = \sum_{n = 0}^{\infty}a_{n}X^{n}\}.
$$

\begin{lemma} \label{lem:29}
A power series $f \in L[[X]]$ is in $\mathscr{R}^{+}_{r}$ if and only if $ \mathrm{inf}_{h \in \mathbb{Z}_{\geq 0}}(v_{\bar{\mathbb{B}}(0, u_{h})}(f) + rh) \neq -\infty$. Furthermore, the spaces $\mathscr{R}^{+}_{r}$ are Banach spaces when equipped with the valuation
$$
    v_{r}(f) = \mathrm{inf}_{h \in \mathbb{Z}_{\geq 0}}(v_{\bar{\mathbb{B}}(0, u_{h})}(f) + rh).
$$
\end{lemma}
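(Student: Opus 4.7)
The plan is to translate both sides of the equivalence into explicit conditions on the coefficients of $f = \sum_{n \geq 0} a_n X^n$, and compare. I would first interpret $v_{\bar{\mathbb{B}}(0,u_h)}(f)$ as the standard Gauss valuation $\inf_n(v_p(a_n) + n u_h)$ of $f$ on the closed disc of radius $p^{-u_h}$. Exchanging the two infima then gives
$$
v_r(f) \;=\; \inf_n\bigl(v_p(a_n) + \varphi(n)\bigr), \qquad \varphi(n) \;:=\; \inf_{h\geq 0}(n u_h + rh),
$$
so the task reduces to a sharp comparison between $\varphi(n)$ and $r\ell(n)$.

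The core estimate I would establish is $\varphi(n) = r\ell(n) + O(1)$, with the implicit constant depending only on $p$ and $r$. Since $u_h$ is essentially $p^{-h}$ up to a bounded factor, choosing $h = \ell(n)$ forces $n u_{\ell(n)} = O(1)$ (because $n < p^{\ell(n)}$), yielding $\varphi(n) \leq r\ell(n) + C_1$. For the matching lower bound I would split into cases: if $h \geq \ell(n)$, already $rh \geq r\ell(n)$; if $h < \ell(n)$, then $n u_h$ is $p^{\ell(n)-h}$ times $n u_{\ell(n)}$ and grows geometrically as $h$ decreases, so this term alone exceeds $r\ell(n) - C_2$. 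Combining the two cases yields $\varphi(n) \geq r\ell(n) - C_2$, and hence the first claim follows: $v_r(f) > -\infty$ if and only if $v_p(a_n) + r\ell(n)$ is bounded below, which is exactly the defining condition of $\mathscr{R}^+_r$.

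For the Banach space statement, $v_r$ is non-Archimedean as an infimum of non-Archimedean Gauss valuations, and the coefficient estimate $v_p(a_n) \geq v_r(f) - r\ell(n) - O(1)$ gives non-degeneracy and shows that each coefficient is continuously detected by $v_r$. Completeness follows in the standard way: for a $v_r$-Cauchy sequence $(f^{(k)})$ with $f^{(k)} = \sum a_n^{(k)} X^n$, each coefficient sequence $(a_n^{(k)})_k$ is Cauchy in $L$ by the above bound, hence converges to some $a_n$; a uniform-in-$k$ tail estimate then shows $f := \sum a_n X^n$ lies in $\mathscr{R}^+_r$ and is the $v_r$-limit. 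The main obstacle is the estimate $\varphi(n) = r\ell(n) + O(1)$: one must choose the optimal $h$ and exploit the precise exponential decay $u_h \asymp p^{-h}$ so that the additive error is genuinely bounded independently of $n$, rather than allowed to grow with $n$.
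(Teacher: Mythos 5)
Your proof is correct; the paper simply cites \cite[Lemme II.1.1]{colmez} for this lemma, and your argument reconstructs that standard proof: exchanging the infima to reduce to $\varphi(n) = \inf_h(nu_h + rh)$, establishing $\varphi(n) = r\ell(n) + O(1)$ by the case split $h \geq \ell(n)$ (where $rh$ dominates) versus $h < \ell(n)$ (where $nu_h$ grows geometrically in $\ell(n)-h$), and deducing completeness via coefficientwise Cauchy-ness and a uniform tail bound. Worth noting: the paper's displayed formula for $v_{\bar{\mathbb{B}}(0,u_h)}$ drops the $nu_h$ term and the printed expression for $u_h$ has a sign error; you correctly read $v_{\bar{\mathbb{B}}(0,u_h)}$ as the Gauss valuation $\inf_n\bigl(v_p(a_n) + nu_h\bigr)$ with $u_h \asymp p^{-h}$, which is what Colmez's argument actually uses.
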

\begin{proof}
\cite[Lemme II.1.1]{colmez}.
\end{proof}
\begin{lemma}
If $f \in \mathscr{R}^{+}_{r}$, $g \in  \mathscr{R}^{+}_{s}$, then $fg \in  \mathscr{R}^{+}_{r + s}$.
\end{lemma}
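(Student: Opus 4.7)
The plan is to reduce everything to the characterization furnished by Lemma \ref{lem:29}. By hypothesis there exist constants $C_f, C_g \in \mathbb{R}$ such that
\begin{equation*}
v_{\bar{\mathbb{B}}(0, u_h)}(f) + rh \geq C_f \qquad \text{and} \qquad v_{\bar{\mathbb{B}}(0, u_h)}(g) + sh \geq C_g
\end{equation*}
for every $h \in \mathbb{Z}_{\geq 0}$. The key ingredient will be the fact that $v_{\bar{\mathbb{B}}(0, u_h)}$ is the standard Gauss-type valuation attached to the closed disk $\bar{\mathbb{B}}(0, u_h)$, and is therefore multiplicative on the ring of power series converging on that disk; concretely, $v_{\bar{\mathbb{B}}(0, u_h)}(fg) = v_{\bar{\mathbb{B}}(0, u_h)}(f) + v_{\bar{\mathbb{B}}(0, u_h)}(g)$ for all $f, g$ in this ring.

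Given this, I would add the two inequalities above and regroup to obtain
\begin{equation*}
v_{\bar{\mathbb{B}}(0, u_h)}(fg) + (r+s)h \geq C_f + C_g,
\end{equation*}
uniformly in $h$. Taking the infimum over $h \in \mathbb{Z}_{\geq 0}$ and invoking Lemma \ref{lem:29} in the reverse direction then yields $fg \in \mathscr{R}^{+}_{r+s}$, together with the quantitative bound $v_{r+s}(fg) \geq v_{r}(f) + v_{s}(g)$ as a byproduct.

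The only piece of actual content is the multiplicativity of the Gauss norm $v_{\bar{\mathbb{B}}(0, u_h)}$; this is a classical fact about the Tate algebra of a closed $p$-adic disk (an integral domain whose canonical norm is multiplicative), proved by inspecting the coefficient of smallest valuation in the product of two suitably renormalized power series. I do not expect this to be a serious obstacle, and everything else in the argument is routine bookkeeping with the infimum characterization of Lemma \ref{lem:29}.
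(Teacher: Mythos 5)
Your argument is correct. The paper gives no proof of its own, only a citation to Colmez's Corollaire II.1.2; your derivation from the infimum characterization of Lemma \ref{lem:29} together with multiplicativity of the Gauss norm $v_{\bar{\mathbb{B}}(0,u_h)}$ (a fact the paper itself invokes later as ``additivity of $v_{\bar{\mathbb{B}}(0,u_h)}$'') is the standard proof behind that citation, so this is essentially the same approach.
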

\begin{proof}
\cite[Corollaire II.1.2]{colmez}.
\end{proof}

 \begin{theorem}
   The map 
 \begin{align*}
     \mathscr{A}: \mathscr{D^{0}}(\mathbb{Z}_{p}, L) &\cong \mathscr{R}^{+} \\
     \mu &\mapsto \int_{\mathbb{Z}_{p}} (1 + X)^{x}\mu(x),
 \end{align*}
 is an isomorphism of $L$-algebras under which the spaces $\mathscr{D}^{r}(\mathbb{Z}_{p}, L)$ and $\mathscr{R}^{+}_{r}$ are identified.
 \end{theorem}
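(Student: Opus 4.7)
The plan is to handle the bounded case $\mathscr{D}^0 \cong \mathscr{R}^+$ directly via Mahler's theorem, then deduce multiplicativity from the functional equation $(1+X)^{x+y} = (1+X)^x(1+X)^y$, and finally refine to the order-$r$ statement by matching growth conditions on Mahler coefficients, appealing to Colmez for the analytic details.

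First I would recall that $\{\binom{x}{n}\}_{n \geq 0}$ is an orthonormal Banach basis of $\mathscr{C}(\mathbb{Z}_p, L)$, so any $f \in \mathscr{C}(\mathbb{Z}_p, L)$ has a unique Mahler expansion $f(x) = \sum_n a_n(f) \binom{x}{n}$ with $a_n(f) \to 0$, and dually any measure $\mu \in \mathscr{D}^0$ is determined by the sequence $b_n(\mu) := \mu\bigl(\binom{x}{n}\bigr)$, with $\mu$ bounded iff $v_p(b_n(\mu))$ is bounded below. Using the formal identity $(1+X)^x = \sum_n \binom{x}{n} X^n$, integrating term by term gives $\mathscr{A}(\mu) = \sum_n b_n(\mu) X^n$, and this power series lies in $L[[X]]$. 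The boundedness condition on $(b_n(\mu))$ is exactly the condition that the series converge on $\mathbb{B}(0,0)$, so $\mathscr{A}$ is an isometric bijection $\mathscr{D}^0 \xrightarrow{\sim} \mathscr{R}^+$ of Banach spaces. To see it is an $L$-algebra map, I would check multiplicativity on the dense subset of finite sums of Dirac measures $\delta_a$ for $a \in \mathbb{Z}_p$: since $\mathscr{A}(\delta_a) = (1+X)^a$, the functional equation gives $\mathscr{A}(\delta_a * \delta_b) = \mathscr{A}(\delta_{a+b}) = (1+X)^{a+b} = \mathscr{A}(\delta_a)\mathscr{A}(\delta_b)$, and bilinear continuity of both convolution and multiplication extends this everywhere.

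The main obstacle is the refinement to order $r$. The heart is a dual description of $\mathscr{C}^r(\mathbb{Z}_p, L)$ in terms of Mahler coefficients: one shows that $f \in \mathscr{C}^r$ iff $v_p(a_n(f)) - r \ell(n) \to \infty$, so that the continuous dual $\mathscr{D}^r$ is identified with sequences $(b_n)$ satisfying $v_p(b_n) + r \ell(n)$ bounded below. Comparing with the characterisation $\mathscr{R}^+_r = \{\sum b_n X^n : v_p(b_n) + r \ell(n) \text{ bounded below}\}$ (which is exactly the content of Lemma \ref{lem:29}, after identifying the two expressions of $v_r$ in terms of the $\bar{\mathbb{B}}(0,u_h)$-valuations), one sees that $\mathscr{A}$ restricts to an isometry $\mathscr{D}^r \xrightarrow{\sim} \mathscr{R}^+_r$. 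The delicate step, where I would cite Colmez rather than reproduce the computation, is the Mahler characterisation of $\mathscr{C}^r$: it requires carefully relating the pointwise Taylor remainders $\varepsilon_n(f)$ to the decay of finite differences of the Mahler coefficients, since the naive term-by-term differentiation does not give the cleanest bounds. Once that characterisation is in hand, the isomorphism of Banach spaces, and hence of $L$-algebras (using the first part), follows formally.
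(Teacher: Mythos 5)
Your proof reproduces the argument of Colmez (Th\'eor\`eme II.2.2 and Proposition II.3.1), which is all the paper cites for this theorem, so the approaches agree in substance; the Mahler-coefficient characterisation of $\mathscr{C}^r$ that you isolate as the ``delicate step'' is indeed the heart of Colmez's argument. One caution on the multiplicativity step: finite $L$-linear combinations of Dirac measures are \emph{not} norm-dense in $\mathscr{D}^0(\mathbb{Z}_p,L)$ --- under $\mathscr{A}$ their span has norm closure missing, e.g., $1/(1-X)=\sum_n X^n\in\mathscr{R}^+$ --- so the ``bilinear continuity extends this everywhere'' argument only works if carried out in the weak-$*$ topology, where Dirac combinations are dense and convolution and multiplication are separately continuous on bounded sets. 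It is cleaner to avoid approximation entirely: for each fixed $z$ with $v_p(z)>0$ the map $\chi_z\colon x\mapsto(1+z)^x$ is a continuous character of $\mathbb{Z}_p$, and the identity $(\mu*\lambda)(\chi_z)=\mu(\chi_z)\lambda(\chi_z)$ is immediate from the definition of convolution via Fubini, so $\mathscr{A}(\mu*\lambda)(z)=\mathscr{A}(\mu)(z)\,\mathscr{A}(\lambda)(z)$ for all such $z$, which forces equality of the power series.
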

\begin{proof}
\cite[Th\'eor\`eme II.2.2]{colmez} and \cite[Proposition II.3.1]{colmez}.
\end{proof}

We now consider the multiplicative topological group $\mathbb{Z}_{p}^{\times}$. 
We have the well-known isomorphism
$$
    \mathbb{Z}_{p}^{\times} \cong (\mathbb{Z}/p\mathbb{Z})^{\times} \times 1 + p\mathbb{Z}_{p},
$$
the second factor of which is topologically cyclic. Let $\gamma$ be a topological generator of $1 + p\mathbb{Z}_{p}$. Such a choice allows us to write any $x \in 1 + p\mathbb{Z}_{p}$ in the form $x = \gamma^{s}$ for a unique $s \in \mathbb{Z}_{p}$, giving us an isomorphism of topological groups
\begin{align*}
    1 + p\mathbb{Z}_{p} &\cong \mathbb{Z}_{p} \\
    \gamma^{s} &\mapsto s.
\end{align*}
Thus $\mathbb{Z}_{p}^{\times}$ is homeomorphic to $p - 1$ copies of $\mathbb{Z}_{p}$, and we can use the above theory of $\mathbb{Z}_{p}$ in this context, defining $\mathrm{LA}(\mathbb{Z}^{\times}_{p}, L)$, $\mathscr{D}(\mathbb{Z}^{\times}_{p}, L)$ in the obvious way. 
\begin{definition}
Define $\textit{weight space}$ to be the rigid analytic space $\mathcal{W}$ over $\mathbb{Q}_{p}$ representing
$$
    L \mapsto \mathrm{Hom}_{\mathrm{cont}}(\mathbb{Z}_{p}^{\times}, L^{\times}).
$$
\end{definition}

Integrating characters gives a canonical identification
$$
    \mathscr{D}(\mathbb{Z}_{p}^{\times}, \mathbb{Q}_{p}) = H^{0}(\mathcal{W}, \mathcal{O}_{\mathcal{W}})
$$
which commutes with base change in the sense that
$$
 \mathscr{D}(\mathbb{Z}_{p}^{\times}, L) =    \mathscr{D}(\mathbb{Z}_{p}^{\times}, \mathbb{Q}_{p})\hat{\otimes}_{\mathbb{Q}_{p}} L = H^{0}(\mathcal{W}_{L}, \mathcal{O}_{\mathcal{W}_{L}}), 
$$
where $\mathcal{W}_{L} = \mathcal{W} \times_{\mathbb{Q}_{p}}\mathrm{Sp}(L)$.
We identify $\mathcal{W}(\mathbb{C}_{p})$ with the set $\sqcup_{\psi} \mathbb{B}_{\psi}$, where $\mathbb{B}_{\psi}= \mathbb{B}(0,0)$ and the disjoint union runs over characters of $\mathbb{Z}_{p}^{\times}$ which factor through $(\mathbb{Z}/p\mathbb{Z})^{\times}$, for details see $\cite{pollack}$. We can thus identify $\mathscr{D}(\mathbb{Z}_{p}^{\times}, L)$ with functions on $\sqcup_{\psi} \mathbb{B}_{\psi}$ which are described by elements of $\mathscr{R}^{+}$ on each $\mathbb{B}_{\psi}$. Given a distribution $\mu \in \mathscr{D}(\mathbb{Z}_{p}^{\times}, L)$ we write the corresponding locally analytic function on $\mathcal{W}(\mathbb{C}_{p})$ as $\mathscr{M}(\mu)$.

\begin{definition}
For $r \in \mathbb{R}_{\geq 0}$ we define a subspace $ \mathscr{D}^{r}(\mathbb{Z}_{p}^{\times}, L) \subset \mathscr{D}(\mathbb{Z}_{p}^{\times}, L)$ by
$$
\mathscr{D}^{r}(\mathbb{Z}_{p}^{\times}, L) = \{\mu \in \mathscr{D}(\mathbb{Z}_{p}^{\times}, L): \mathscr{M}(\mu)\vert_{\mathbb{B}_{\psi}} \in \mathscr{R}^{+}_{r} \ \text{for all} \ \psi \}
$$ 
for all $\psi$.
\end{definition}

\section{$P$-adic $L$-functions and Pollack's $\pm$ construction} \label{sect3}
\subsection{$P$-stabilisations}
Fix $n \geq 1$ and set $G = \mathrm{GL}_{2n}$. Let $T$ be the diagonal maximal torus of $G$ and $B$ the upper triangular Borel subgroup. Let $\Pi$ be a cuspidal automorphic representation  of $G(\mathbb{A}_{\mathbb{Q}})$ which is cohomological with respect to an integral weight 
$$
    \mu = (\mu_{1}, \ldots, \mu_{2n}) \in \mathbb{Z}^{2n}
$$ 
and suppose that $\Pi$ is the transfer of a globally generic cuspidal automorphic representation of $\mathrm{GSpin}_{2n + 1}(\mathbb{A}_{\mathbb{Q}})$. Such a weight $\mu$ is \textit{dominant}; that is,
$$
\mu_{1} \geq \ldots \geq \mu_{2n}
$$ 
and \textit{pure}: there is an integer $w$ such that 
$$
    \mu_{i} + \mu_{2n + 1 - i} = w,  \ i = 1, \ldots n.
$$
Given a prime $p$ at which $\Pi$ is unramified, define the `Hodge-Tate weights' of $\Pi$ at $p$ to be the integers
$$
    h_{i} = \mu_{i} + 2n - i, \  i = 1, \ldots 2n.
$$

\begin{definition} \label{def:1}
Define a set
$$
    \mathrm{Crit}(\Pi) = \{ j \in \mathbb{Z}: \mu_{n} \geq j \geq \mu_{n + 1}\}.
$$
\end{definition}
\begin{remark}
It is shown in \cite{grobner} that the half integers $j + 1/2$ for $j \in \mathrm{Crit}(\Pi)$ are precisely the critical points of the $L$-function $L(s, \Pi)$ in the sense of Deligne \cite{deligne}. 
\end{remark}
Let $p$ be a prime at which $\Pi$ is unramified. There is an unramified character 
$$
    \lambda_{p}: T(\mathbb{Q}_{p}) \to \mathbb{C}^{\times}
$$
such that $\Pi_{p}$ is isomorphic to the normalised parabolic induction module
$ \mathrm{Ind}_{B(\mathbb{Q}_{p})}^{G(\mathbb{Q}_{p})}(\vert \cdot \vert^{\frac{2n - 1}{2}}\lambda_{p} )
$.
We define the Satake parameters at $p$ to be the values $\alpha_{i} = \lambda_{p, i}(p)$, where $\lambda_{p,i}$ denotes the projection to the $i$th diagonal entry.
After choosing an isomorphism $\bar{\mathbb{Q}}_{p} \cong \mathbb{C}$, we reorder the $\alpha_{i}$ so that they are
ordered with respect to decreasing $p$-adic valuation and such that $\alpha_{i}\alpha_{2n + 1 - i} = \lambda$ for a fixed $\lambda$ with $p$-adic valuation $2n - 1 + w$. That we can do this is a result of the transfer from $\mathrm{GSpin}_{2n + 1}$, see \cite[(6.4)]{asgari}. 

We define the Hodge polygon of $\Pi$ to be the piecewise linear curve joining the following points in $\mathbb{R}^{n}$:
$$
   \left \{(0,0), (j, \sum_{i = 1}
^{j}h_{2n + 1 - i}): j = 1, \ldots,2n \right \}$$
and define the Newton polygon on $\Pi$ at $p$ to be the piecewise linear curve joining the points
$$
   \left \{(0,0), (j, \sum_{i = 1}
^{j}v_{p}(\alpha_{2n + 1 - i})): j = 1, \ldots,2n \right\}.
$$
The following result  is due in this form to Hida \cite{hida}.
\begin{proposition}
The  Newton polygon lies on or above the Hodge polygon and the end points coincide.
\end{proposition}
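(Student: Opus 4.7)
The proposition decomposes into two independent claims: the endpoints of the two polygons coincide, and in between the Newton polygon stays weakly above the Hodge polygon. My plan is to dispatch the endpoint claim by a direct computation and then reduce the main inequality to a pointwise bound on elementary symmetric polynomials of Satake parameters, which is the arithmetic content supplied by Hida.

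For the endpoints, both sums telescope under the symmetries available. By purity, $\mu_i + \mu_{2n+1-i} = w$ for each $i$, so $\sum_{i=1}^{2n}\mu_i = nw$, and $\sum_{i=1}^{2n}(2n-i) = n(2n-1)$; hence $\sum_{i=1}^{2n} h_i = n(w + 2n-1)$. On the Newton side, the pairing $\alpha_i\alpha_{2n+1-i} = \lambda$ with $v_p(\lambda) = 2n-1+w$ gives
$$
\sum_{i=1}^{2n} v_p(\alpha_i) = n\,v_p(\lambda) = n(2n-1+w),
$$
matching the Hodge total. The rearrangement $i \mapsto 2n+1-i$ in the indexing of the stated polygons does not affect the final sum.

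For the inequality at intermediate $j$, I would observe that both polygons are convex with the same endpoints: the Hodge slopes are $h_{2n} < h_{2n-1} < \cdots < h_1$, strict since $\mu_i + (2n-i)$ is strictly decreasing under $\mu_i \geq \mu_{i+1}$; the Newton slopes are $v_p(\alpha_{2n}) \leq \cdots \leq v_p(\alpha_1)$ by the chosen ordering. Writing $P(T) = \prod_{i=1}^{2n}(T - \alpha_i)$, the Newton polygon in the proposition is exactly the lower convex hull of the points $\{(j, v_p(e_j(\alpha_1,\ldots,\alpha_{2n})))\}_{j=0}^{2n}$, where $e_j$ is the $j$th elementary symmetric polynomial. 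Since the Hodge polygon is itself convex, it therefore suffices to prove the pointwise bound
$$
v_p\bigl(e_j(\alpha_1,\ldots,\alpha_{2n})\bigr) \;\geq\; \sum_{i=1}^{j} h_{2n+1-i}, \qquad j = 0, 1, \ldots, 2n,
$$
with equality forced at $j = 0, 2n$ by the endpoint computation above.

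The substantive step is this pointwise bound, and here I would invoke Hida's argument. The quantity $e_j(\alpha_1,\ldots,\alpha_{2n})$ is the Hecke eigenvalue of $\Pi$ for the unramified Hecke operator attached to the double coset of $\operatorname{diag}(p,\ldots,p,1,\ldots,1)$ (with $j$ copies of $p$), after the shift by $|\cdot|^{(2n-1)/2}$ built into the Satake isomorphism. After suitably normalising this operator so that it acts on the integral cohomology of the locally symmetric space with coefficients in the algebraic representation of highest weight $\mu$, one obtains an integral operator, and its eigenvalue on $\Pi$-isotypic cohomology differs from $e_j(\alpha_1,\ldots,\alpha_{2n})$ by precisely the power of $p$ dictated by the action of $\operatorname{diag}(p,\ldots,p,1,\ldots,1)$ on a highest weight vector in that representation; a direct computation identifies this power as $\sum_{i=1}^{j} h_{2n+1-i}$. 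The main obstacle is therefore purely bookkeeping: matching the unitary-vs-arithmetic normalisation of the Satake parameters (responsible for the $(2n-1)/2$ shift) and the action on the weight-$\mu$ representation so that the contribution from the integral structure lines up exactly with the Hodge polygon value at $j$. Once the normalisations are reconciled, integrality gives the inequality, and the convex hull argument above concludes the proof.
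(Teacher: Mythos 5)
The paper gives no proof of this proposition; it is stated with a bare citation to Hida, so there is no argument to compare yours against. What you have written is therefore a reconstruction of the underlying argument, and on its own terms it is essentially correct. The endpoint computation is right: $\sum h_i = \sum \mu_i + \sum(2n-i) = nw + n(2n-1)$ by purity, and $\prod \alpha_i = \lambda^n$ with $v_p(\lambda) = 2n-1+w$ gives the matching Newton total. The convexity reduction is also clean and correct: since the Hodge slopes $h_{2n} < \cdots < h_1$ are strictly increasing by dominance, the Hodge polygon is convex, and the Newton polygon is the lower convex hull of $\{(j, v_p(e_j))\}$, which lies above any convex curve passing on or below those points; so it suffices to prove $v_p(e_j(\alpha)) \geq \sum_{i=1}^j h_{2n+1-i}$, which is exactly the Hecke-integrality statement that Hida supplies.

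One small imprecision in the last paragraph: the normalising power of $p$ is the minimum of $p^{\nu_1 + \cdots + \nu_j}$ over all weights $\nu$ of the coefficient module $V_\mu$, which is achieved at the \emph{lowest} weight $(\mu_{2n}, \ldots, \mu_1)$, not the highest weight vector; this gives the $\sum_{i=1}^{j}\mu_{2n+1-i}$ part of the exponent. The remaining $\binom{j}{2} = \sum_{i=1}^{j}(i-1)$ in $\sum_{i=1}^{j}h_{2n+1-i}$ comes from the $|\cdot|^{(2n-1)/2}$ factor built into the paper's (arithmetic) normalisation of the Satake parameters, interacting with the single-coset decomposition of the double coset. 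You flag this as the bookkeeping to reconcile, and indeed that is exactly where the reconciliation lives; once the lowest-weight vector and the $\rho$-type shift are accounted for the exponent is exactly the Hodge value and the argument closes.
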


Let $I = (i_{1}, \ldots, i_{n}) \subset \mathbb{Z}^{n}$ satisfy $1 \leq i_{1} < \ldots < i_{n} \leq 2n$, and set
$$
\alpha_{I} := \alpha_{i_{1}}\cdots \alpha_{i_{n}}.
$$ \begin{definition}
Let $I$ be as above.
\begin{itemize}
\item  We say that the product $\alpha_{I}$ is of \textit{Shalika type} if $I$ contains precisely one element of each pair $\{i, 2n + 1 - i\}$ for $i = 1, \ldots, n$.
\item We say that $\alpha_{I}$ is \textit{$Q$-regular} if it is of Shalika type and if for any other choice of $J \subset \mathbb{Z}^{n}$ satisfying the above properties $a_{J} \neq a_{I}$.  This amounts to choosing a simple Hecke eigenvalue on the parahoric invariants of $\Pi_{p}$, see \cite[\S 3.1]{dimitrov} and \cite[\S2.7]{ChrisShalika}.
\item Set $r_{I} = v_{p}(\alpha_{I}) - \sum_{i = n + 1}^{2n}h_{i}$. We say that $\alpha_{I}$ is \textit{non-critical slope} if it satisfies
$$
   r_{I} < \#\mathrm{Crit}(\Pi).
$$
\end{itemize}
\end{definition}

In \cite{ChrisShalika} the authors construct \footnote{The authors actually construct $p$-adic $L$-functions for the wider class of \textit{non-critical} $p$-stabilisations, but we only work with non-critical slope $p$-stabilisations in this paper.} a locally analytic distribution $L^{(\alpha_{I})}_{p} \in \mathscr{D}(\mathbb{Z}_{p}^{\times}, \mathbb{C}_{p}^{\times})$ with respect to a choice of non-critical slope $Q$-regular `$p$-stabilization data' $\alpha_{I}$. The distribution  $L_{p}^{(\alpha_{I})}$ is of order $r_{I}$ and by \cite{visik} Lemma 2.10 is uniquely defined by the following interpolation property: Let $\theta: \mathbb{Z}^{\times}_{p} \to \bar{\mathbb{Q}}_{p}$ be a finite-order character of conductor $p^{m}$, then for $m \geq 1$ we have
\begin{equation} \label{eq:4}
    \int_{\mathbb{Z}^{\times}_{p}}x^{j}\theta(x) L^{(\alpha_{I})}_{p}(x) = \xi_{\infty,j} \frac{c_{x^{j}\theta}}{\alpha_{I}^{m}}L(\Pi\otimes\theta, j + 1/2), \ j \in \mathrm{Crit}(\Pi),
\end{equation}
where $c_{x^{j}\theta}$ is a constant depending only on $x^{j}\theta$ and the infinite factor $\xi_{\infty, j}$ is the product of a choice of period and a zeta integral at infinity. We call such a $L_{p}^{(\alpha_{I})}$ a `non-critical slope $p$-adic $L$-function'. This condition suffices to prove the following bound on the number of possible non-critical slope $p$-adic $L$-functions attached to $\Pi$.

\begin{theorem} \label{thm:1}
For $\Pi$ as above there are at most two choices of $p$-stabilization $\alpha_{I}$ for which $L^{(\alpha_{I})}_{p}$ is non-critical slope.
\end{theorem}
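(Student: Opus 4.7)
The plan is to parametrise Shalika-type subsets $I$ relative to the ``minimal'' choice $I_{\min} = \{n+1, \ldots, 2n\}$, and count how many can be non-critical slope. Writing $a_i := v_p(\alpha_i)$, every Shalika-type $I$ is determined by its \emph{swap set} $S := I \cap \{1, \ldots, n\}$: for each pair $\{k, 2n+1-k\}$ with $k \leq n$, the set $I$ either contains $k$ (``swap'') or $2n+1-k$ (``no swap''). This gives
\[
v_p(\alpha_I) = v_p(\alpha_{I_{\min}}) + \sum_{k \in S} d_k, \qquad d_k := a_k - a_{2n+1-k}.
\]
The pairing $a_i + a_{2n+1-i} = 2n-1+w = h_i + h_{2n+1-i}$ (coming from the transfer from $\mathrm{GSpin}_{2n+1}$), together with the ordering $a_1 \geq \cdots \geq a_{2n}$, implies that the sequence $(d_k)_{k=1}^{n}$ is non-negative and decreasing. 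Setting $C := \#\mathrm{Crit}(\Pi) - r_{I_{\min}}$, the non-critical slope condition for $I$ becomes $\sum_{k \in S} d_k < C$.

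The main step will be the inequality $d_{n-1} \geq C$. Granting this, any swap set $S$ containing some $k \leq n-1$ satisfies $\sum_{k' \in S} d_{k'} \geq d_k \geq d_{n-1} \geq C$ by monotonicity, so cannot give non-critical slope. Hence the only swap sets that can work are $S = \emptyset$ (giving $I_{\min}$) and $S = \{n\}$ (swapping only the central pair), at most two in total. To prove $d_{n-1} \geq C$, I would unwind the definitions to rewrite it as
\[
a_{n-1} + a_{n+1} + \sum_{i=n+3}^{2n} a_i \;\geq\; h_n + h_{n+2} + \sum_{i=n+3}^{2n} h_i.
\]
Hida's Newton-above-Hodge bound applied at the cut $i = n+2$ gives $\sum_{i=n+3}^{2n}(a_i - h_i) \geq 0$, reducing the problem to $a_{n-1} + a_{n+1} \geq h_n + h_{n+2}$. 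Substituting $a_{n-1} = h_{n-1} + h_{n+2} - a_{n+2}$ via the pairing on the pair $\{n-1, n+2\}$ further collapses this to
\[
(h_{n-1} - h_n) + (a_{n+1} - a_{n+2}) \geq 0,
\]
which holds since $h_{n-1} - h_n = \mu_{n-1} - \mu_n + 1 \geq 1$ by dominance of $\mu$ and $a_{n+1} \geq a_{n+2}$ by the ordering of Satake parameters.

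I expect the hard part to be identifying the right combination of Newton-above-Hodge bounds and pairing identities: \emph{a priori} the quantity $C$ can be very small, and $d_{n-1}$ involves only two of the $2n$ Satake parameters, so there is no obvious reason an inequality of this shape should hold. The crucial observation is that pairing on the \emph{adjacent} pair $\{n-1, n+2\}$ (rather than the central one) collapses the global inequality to a sum of two non-negative nearest-neighbour gaps, at which point everything is immediate. Once $d_{n-1} \geq C$ is in hand, monotonicity of $(d_k)$ instantly rules out every swap set outside $\{\emptyset, \{n\}\}$ and the theorem follows.
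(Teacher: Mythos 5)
Your swap-set parametrisation of Shalika-type $I$, together with the monotonicity of $d_k = a_k - a_{2n+1-k}$, is a genuinely cleaner way to organise the argument than the paper's, which instead identifies the single ``bad'' $I$ of minimal valuation and bounds it directly. Your chain of inequalities proving $d_{n-1} \geq C$ — Newton-above-Hodge at the cut $i = n+2$, then the pairing identity on $\{n-1, n+2\}$, then dominance of $\mu$ together with the ordering of the $a_i$ — is correct, and uses a different pairing from the paper's (the paper pairs on the central $\{n, n+1\}$). For Shalika-type $I$ your argument is complete.

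However, there is a genuine gap: you silently restrict to Shalika-type $I$ from the very first sentence, whereas the theorem is about \emph{all} $p$-stabilisations $\alpha_I = \alpha_{i_1}\cdots\alpha_{i_n}$ with $1 \leq i_1 < \cdots < i_n \leq 2n$. The paper's Corollary ``a non-critical slope $p$-stabilisation is of Shalika type'' is a \emph{consequence} of this theorem's proof, not a hypothesis you may invoke. Non-Shalika $I$ lie outside your swap-set parametrisation, and your inequality $d_{n-1} \geq C$ is not strong enough to rule them out: the minimal-valuation non-Shalika $I$ is $(n, n+1, n+3, \ldots, 2n)$, whose excess over $v_p(\alpha_{I_{\min}})$ is $a_n - a_{n+2}$, which satisfies only $d_n \leq a_n - a_{n+2} \leq d_{n-1}$, so $d_{n-1} \geq C$ does not give $a_n - a_{n+2} \geq C$. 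You would need the slightly stronger inequality $a_n - a_{n+2} \geq C$, which \emph{is} provable by the same method — reduce via Newton-above-Hodge at $i = n+2$ to $a_n + a_{n+1} \geq h_n + h_{n+2}$, then use the central pairing $a_n + a_{n+1} = h_n + h_{n+1}$ and $h_{n+1} \geq h_{n+2}$ (this is essentially what the paper does) — and it also implies $d_{n-1} \geq C$, so it would subsume your Shalika argument and close the non-Shalika case simultaneously. As written, though, the proposal does not establish the theorem.
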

\begin{proof}
Without loss of generality we may assume that $\mu_{2n} = 0$, forcing $w = \mu_{1}$. The end points of the Newton and Hodge polygons coinciding implies that 
\begin{equation} \label{eq:5}
    v_{p}(\lambda) = h_{i} + h_{2n + 1 - i}, \ i = 1, \ldots, n. \tag{$\dagger$}
\end{equation}
The `non-critical slope' condition for $I = (i_{1}, \ldots, i_{n})$ is equivalent to 
$$
    v_{p}(\alpha_{I}) - \sum_{i = n+ 1}^{2n}h_{i} < h_{n} - h_{n + 1}.
$$

We observe that any $I$ that includes a a 2-tuple of integers the form  $(i, 2n + 1 - i)$ is not non-critical slope. Indeed, we can find an explicit $I$ containing some $(i, 2n + 1 - i)$ with minimal valuation, namely $(n, n + 1, n + 3, \ldots, 2n)$, amongst all $I$ containing some $(i, 2n + 1 - i)$. For such an $I$ we have
\begin{align*}
    v_{p}(\alpha_{I}) - (h_{n + 1} + h_{n + 2} \ldots + h_{2n}) &\geq h_{n} + h_{n + 1} + h_{n + 3} + \dots + h_{2n} - (h_{n + 1} +  \ldots + h_{2n})\\
    &= h_{n} - h_{n + 2} \\
    &> h_{n} - h_{n + 1},
\end{align*}
where the first inequality is a consequence of the Newton polygon lying above the Hodge polygon and \eqref{eq:5}, and the strict inequality is due to dominance. Thus any $I$ containing a pair of integers $(i, j)$ with $i < j$ and $j \leq 2n + 1 - i$ cannot be non-critical slope, since any such $I$ has greater valuation than $(n, n + 1, n + 3, \ldots, 2n)$. 
This leaves us with two choices of potential non-critical slope $n$-tuples:
$$
    I_{n + 1} = (n + 1, n + 2, \ldots, 2n),
$$
and
$$
    I_{n} = (n, n + 2, \ldots, 2n).
$$
\end{proof}
In light of Theorem \ref{thm:1} it is clear that the only two choices of $p$-stabilization data which can give a non-critical slope distribution are 
\begin{align*}
      \alpha = \alpha_{n + 1}\alpha_{n + 2}\ldots\alpha_{2n}, \ \beta = \alpha_{n}\alpha_{n + 2}\ldots\alpha_{2n}.
\end{align*}

\begin{corollary}
A non-critical slope $p$-stabilization is of Shalika type.
\end{corollary}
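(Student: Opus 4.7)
The plan is to leverage the classification of non-critical slope $p$-stabilizations already achieved in Theorem \ref{thm:1}: we showed that only $I_{n+1} = (n+1, n+2, \ldots, 2n)$ and $I_n = (n, n+2, n+3, \ldots, 2n)$ can possibly give non-critical slope. So the corollary reduces to the purely combinatorial check that these two index sets are of Shalika type, i.e.\ each contains exactly one element of every pair $\{i, 2n+1-i\}$ for $i = 1, \ldots, n$.

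For $I_{n+1}$, the $k$-th element $n+k$ belongs to the pair $\{n+1-k, n+k\}$, and as $k$ ranges from $1$ to $n$ this hits each of the $n$ pairs exactly once. For $I_n$ the element $n$ lies in the pair $\{n, n+1\}$, and for $k \geq 2$ the element $n+k$ again lies in $\{n+1-k, n+k\}$; none of these pairs is repeated, and none is omitted, so we again get exactly one representative from each pair.

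Alternatively, and perhaps more conceptually, one can simply observe that since $|I|=n$ equals the number of pairs $\{i,2n+1-i\}$, the set $I$ fails to be of Shalika type if and only if $I$ contains some complete pair $\{i,2n+1-i\}$. But the first step in the proof of Theorem \ref{thm:1} (comparing against the distinguished index set $(n,n+1,n+3,\ldots,2n)$ via the Newton-above-Hodge inequality together with \eqref{eq:5} and dominance) shows exactly that any such $I$ has $r_I \geq h_n - h_{n+2} > h_n - h_{n+1}$, hence cannot be of non-critical slope. The contrapositive is the corollary.

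There is no real obstacle here; the content is entirely combinatorial once Theorem \ref{thm:1} has been established, and either of the two routes above closes the argument in a line or two.
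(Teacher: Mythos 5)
Your proposal is correct and matches the paper's (implicit) reasoning: the paper offers no separate proof for this corollary precisely because it follows immediately from Theorem~\ref{thm:1}, once one verifies that the two surviving index sets $I_{n+1}$ and $I_n$ are of Shalika type, which is the combinatorial check you carry out. Your alternative route, extracting the contrapositive directly from the inequality $r_I \geq h_n - h_{n+2} > h_n - h_{n+1}$ established in the proof of Theorem~\ref{thm:1} for any $I$ containing a complete pair, is equally valid and arguably makes the logical dependence more transparent.
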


\subsection{Pollack $\pm$-$L$-functions}
Let $\Pi$ be as in the previous section. 
\begin{definition}
We say that $\Pi$ satisfies the `Pollack condition' if
$$
    \alpha_{n} + \alpha_{n + 1} = 0.
$$
\end{definition}
\begin{remark} \label{rem:1}
If one considers a quartic CM-field $E$ such that there exists a rational prime $p$ which splits in the totally real subfield $F$ of $E$ into two prime ideals, one of which splits in $E$ and the other of which is inert, then one can construct a Hecke character over $E$ which, by automorphic induction to $GL_{4}/\mathbb{Q}$ via $\mathrm{GL}_{2}/F$, induces a representation of $\mathrm{GL}_{4}$ satisfying the Pollack condition.
\end{remark}
As in the previous section, set 
$$
    \alpha = \alpha_{n + 1}\alpha_{n + 2}\ldots\alpha_{2n}, \ \beta = \alpha_{n}\alpha_{n + 2}\ldots\alpha_{2n},
$$
and let $r = v_{p}(\alpha) - \sum_{i = n + 1}^{2n}h_{i} = v_{p}(\beta) - \sum_{i = n + 1}^{2n}h_{i}$. The Pollack condition forces
$$
    r \geq \#\mathrm{Crit}(\Pi)/2
$$
since
\begin{align*}
   r=  v_{p}(\alpha) - \sum_{i = n + 1}^{2n}h_{i} &\geq v_{p}(\alpha_{n + 1}) - h_{n + 1 } \\
    &= \frac{h_{n} + h_{n + 1}}{2} - h_{n + 1} \\
    &= \frac{h_{n} - h_{n + 1}}{2} \\
    &= \#\mathrm{Crit}(\Pi)/2,
\end{align*}
where the first inequality comes from Newton-above-Hodge, and the lower bound given is tight, with the bound being achieved when $\alpha_{n + 2}\cdots\alpha_{2n}$ is of minimal slope.
We assume that 
$$
    r  < \#\mathrm{Crit}(\Pi)
$$
so that we can construct precisely two non-critical slope $p$-adic  $L$-functions $L^{(\alpha)}_{p}, L_{p}^{(\beta)} \in \mathscr{D}^{r}(\mathbb{Z}_{p}^{\times}, \mathbb{C}_{p})$.
\begin{remark}
Unlike in the case of $\mathrm{GL}_{2}$, for $n \geq 1$ the non-critical slope condition for $\alpha, \beta$ is not implied by the Pollack condition. Indeed, one can construct a cuspidal automorphic representation $\Pi$ of $\mathrm{GL}_{4}(\mathbb{A}_{\mathbb{Q}})$ satisfying the Pollack condition at a prime $p$ for which $v_{p}(\alpha_{i}) = v_{p}(\alpha_{j})$ for $1 \leq i,j \leq 4$. The value $r$ is then the same for any choice of $p$-stabilization, so there are either six non-critical slope $p$-adic $L$-functions or there are none. But Theorem \ref{thm:1} says there can be at most two choices of non-critical slope $p$-stabilization.
\end{remark}
Following Pollack, we define 
$$
    G^{\pm} = \frac{L^{(\alpha)}_{p} \pm L^{(\beta)}_{p}}{2}, 
$$
so that
\begin{align*}
    L^{(\alpha)}_{p} &= G^{+} + G^{-} \\
    L^{(\beta)}_{p} &= G^{+} - G^{-}.
\end{align*}
 We note that in the case of $L^{(\beta)}_{p}$, \eqref{eq:4} looks like
$$
     \int_{\mathbb{Z}_{p}^{\times}}x^{j}\theta(x) L^{(\beta)}_{p}(x) = (-1)^{m}\xi_{\infty, j}\frac{c_{x^{j}\theta}}{\alpha^{m}}L(\Pi\otimes\theta, j + 1/2), \ j \in \mathrm{Crit}(\Pi),
$$
from which it follows that
\begin{align*}
    \int_{\mathbb{Z}_{p}^{\times}}x^{j}\theta(x) G^{+}(x) &= 0, \ \text{if the conductor of $\theta$ is $p^{m}$, $m$ even} \\
    \int_{\mathbb{Z}_{p}^{\times}}x^{j}\theta(x) G^{-}(x) &= 0, \ \text{if the conductor of $\theta$ is $p^{m}$, $m$ odd}. 
\end{align*}
Equivalently, if $\zeta_{p^{m}}$ is any $p^{m}$th root of unity,
\begin{align*}
    \mathscr{M}(G^{+})(\gamma^{j}\zeta_{p^{m}} - 1) = 0 \ &\text{for $m$ even} \\
     \mathscr{M}(G^{-})(\gamma^{j}\zeta_{p^{m}} - 1) = 0 \ &\text{for $m$ odd}
\end{align*}
on each of the connected components\footnote{We are referring to the connected components as a rigid space as opposed to those of the topology on $\mathbb{C}_{p}$ induced by $v_{p}$.}  of $\mathcal{W}(\mathbb{C}_{p})$ (which we recall we are identifying with $p- 1$ copies of $\mathbb{B}(0,0)$).

For any $j \in \mathbb{Z}$, Pollack defines the following power series 

\begin{align*}
    \mathrm{log}_{p,j}^{+}(X) &:= \frac{1}{p}\prod_{m = 1}^{\infty}\frac{\Phi_{2m}(\gamma^{-j}(1 + X))}{p} \\
    \mathrm{log}_{p,j}^{-}(X) &:= \frac{1}{p}\prod_{m}^{\infty}\frac{\Phi_{2m - 1}(\gamma^{-j}(1 + X))}{p},
\end{align*}
in $\mathbb{Q}_{p}[[X]]$, where $\Phi_{m}$ is the $p^{m}$th cyclotomic polynomial.

\begin{lemma} \label{lem:30}
The power series $\mathrm{log}^{+}_{p, j}(X)$ (resp. $\mathrm{log}^{-}_{p, j}( X)$) is contained in $\mathscr{R}_{1/2}^{+}$ and vanishes at precisely the points $\gamma^{j}\zeta_{p^{m}} - 1$ for every $p^{m}$th root of unity $\zeta_{p^{m}}$ with $m$ even (resp. odd).
\end{lemma}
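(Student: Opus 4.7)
My plan is to establish the $\mathscr{R}^{+}_{1/2}$ claim first and then deduce the zero-set description. Once membership in $\mathscr{R}^{+}_{1/2} \subset \mathscr{R}^{+}$ is known, the infinite product converges to an analytic function on the open unit disk, so its zero set is exactly the union of zero sets of the factors $\Phi_{2m}(\gamma^{-j}(1+X))$. These are precisely the points $X = \gamma^{j}\zeta - 1$ for $\zeta$ a primitive $p^{2m}$-th root of unity, matching the statement.

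For the $\mathscr{R}^{+}_{1/2}$ claim I would invoke Lemma \ref{lem:29}, reducing to a uniform lower bound on $v_{\bar{\mathbb{B}}(0, u_{h})}(\mathrm{log}^{+}_{p,j}) + h/2$. The computation hinges on the Gauss valuation of a single factor $\Phi_{2m}(\gamma^{-j}(1+X))/p$ on $\bar{\mathbb{B}}(0, u_{h})$. Factoring $\Phi_{2m}(Y) = \prod_{\zeta}(Y - \zeta)$ over the $p^{2m-1}(p-1)$ primitive $p^{2m}$-th roots of unity, the ultrametric inequality gives $v_{p}(\gamma^{-j} - \zeta) = v_{p}(\zeta - 1) = u_{2m-1}$, using that $v_{p}(\gamma^{-j} - 1) \geq 1 > u_{2m-1}$. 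Hence each linear factor has Gauss valuation $\min(u_{2m-1}, u_{h})$ on the disk, and multiplying the $p^{2m-1}(p-1)$ of them yields a factor-total of $p^{2m-1}(p-1)\min(u_{2m-1}, u_{h})$, which equals $1$ when $2m - 1 \geq h$ and $p^{2m-1-h}$ when $2m - 1 < h$. After dividing by $p$, the $m$-th factor contributes $0$ in the first regime and $p^{2m-1-h} - 1$ in the second.

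Summing over $m$ together with the $-1$ from the external $1/p$, the large-$m$ terms vanish, and the remaining $\lfloor h/2 \rfloor$ or so terms form a geometric series bounded uniformly in $h$ minus roughly $h/2$ copies of $-1$. This gives $v_{\bar{\mathbb{B}}(0, u_{h})}(\mathrm{log}^{+}_{p,j}) = -h/2 + O(1)$, which is exactly the bound required by Lemma \ref{lem:29}. The odd case runs identically, with $\Phi_{2m-1}$ of degree $p^{2m-2}(p-1)$ replacing the even versions. The main obstacle is purely bookkeeping: pinning down the boundary case $2m - 1 = h$, checking the geometric series really is bounded, and making sure the final slope is exactly $1/2$ rather than being off by a factor of two; conceptually the slope $1/2$ is structurally forced by keeping only half of the cyclotomic polynomials in the product.
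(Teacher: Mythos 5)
Your proposal tracks the paper's proof closely: the paper likewise reduces the $\mathscr{R}^{+}_{1/2}$ claim to the bound from Lemma~\ref{lem:29}, establishes precisely the same piecewise formula for $v_{\bar{\mathbb{B}}(0, u_{h})}(\Phi_{n}(\gamma^{-j}(1 + X))/p)$ (there via the phrase ``Newton copolygon of $\Phi_n$'', which is just your linear-factorization computation packaged differently), and then sums the surviving terms to get $-h/2 + O(1)$; for the zero-set claim the paper simply cites Pollack's Lemmas 4.1 and 4.5, which argue exactly as you sketch via convergence of the infinite product on the open disk. So your proposal is correct and is essentially the paper's argument, just with the cyclotomic valuation made explicit rather than quoted.
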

\begin{proof}
The statements in the lemma are proved in \cite[Lemma 4.1]{pollack} and \cite[Lemma 4.5]{pollack}. We reprove that $\mathrm{log}^{+}_{p,j}$ is contained in $\mathscr{R}^{+}_{1/2}$ using the setup of Section \ref{sec:10}, the result for $\mathrm{log}^{-}_{p,j}$ being similar. 

An analysis of the Newton copolygon of the Eisenstein polynomial $\Phi_{n}$ gives us that 
$$
    v_{\bar{\mathbb{B}}(0, u_{h})}(\Phi_{n}((\gamma^{-j}(1 + X))/p) = \begin{cases}
    0  &\text{if} \ h \leq n - 1 \\
    p^{n - h - 1} - 1 &\text{otherwise},
     \end{cases}
$$
and thus 
\begin{align*}
    v_{\bar{\mathbb{B}}(0, u_{h})}(\mathrm{log}^{+}_{p,j}) &= \sum^{ \frac{h + 1}{2}}_{m = 1}\left (p^{2m - h - 1} - 1 \right )\\
    &= \frac{p^{-(h + 1)} - 1}{1 - p^{2}} - \frac{1}{2} - \frac{h}{2},
\end{align*}
whence 
$$
    \mathrm{inf}_{h} \left ( v_{\bar{\mathbb{B}}(0, u_{h})}(\mathrm{log}^{+}_{p, j}) + \frac{h}{2} \right) = \frac{1}{p^{2} - 1} - \frac{1}{2} < \infty,
$$
so $\mathrm{log}^{+}_{p,j}(X) \in \mathscr{R}^{+}_{1/2}$ by Lemma \ref{lem:29}. 
\end{proof}

We define 
$$
    \mathrm{log}^{\pm}_{\Pi}(X) = \prod_{j \in \#\mathrm{Crit}(\Pi)}\mathrm{log}^{\pm}_{p,j}(X) \in \mathscr{R}^{+}_{\#\mathrm{Crit}(\Pi)/2}.
$$
By abuse of notation we will write $\mathrm{log}^{\pm}_{\Pi}(X)$ for the function on $\mathcal{W}(\mathbb{C}_{p})$ given by $\mathrm{log}^{\pm}_{\Pi}(X)$ on each connected component of $\mathcal{W}(\mathbb{C}_{p})$. 

\begin{lemma} \label{lem:28}
We have
$$
   \limsup_{h}\left(  v_{\bar{\mathbb{B}}(0, u_{h})}(\mathrm{log}^{\pm}_{\Pi}) + \frac{\#\mathrm{Crit}(\Pi)}{2}h \right) < \infty
$$
\end{lemma}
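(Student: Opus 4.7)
The statement is a multi-factor analogue of Lemma \ref{lem:30}, and my plan is to reduce it to that lemma by distributing the inequality across the product defining $\mathrm{log}^{\pm}_{\Pi}$.

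First, I would note that the valuation $v_{\bar{\mathbb{B}}(0, u_{h})}$ is the standard Gauss-type norm on the ring of power series convergent on $\bar{\mathbb{B}}(0, u_{h})$, and is multiplicative (Gauss's lemma for nonarchimedean power series). Applied to the factorisation $\mathrm{log}^{\pm}_{\Pi} = \prod_{j \in \mathrm{Crit}(\Pi)} \mathrm{log}^{\pm}_{p,j}$, this gives
$$
v_{\bar{\mathbb{B}}(0, u_{h})}(\mathrm{log}^{\pm}_{\Pi}) = \sum_{j \in \mathrm{Crit}(\Pi)} v_{\bar{\mathbb{B}}(0, u_{h})}(\mathrm{log}^{\pm}_{p,j}).
$$
Splitting the linear term as $\frac{\#\mathrm{Crit}(\Pi)}{2}h = \sum_{j \in \mathrm{Crit}(\Pi)} \frac{h}{2}$, the quantity whose $\limsup$ must be bounded rewrites as
$$
\sum_{j \in \mathrm{Crit}(\Pi)} \left( v_{\bar{\mathbb{B}}(0, u_{h})}(\mathrm{log}^{\pm}_{p,j}) + \frac{h}{2} \right).
$$

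Second, the explicit evaluation carried out in the proof of Lemma \ref{lem:30} shows that each individual summand on the right converges to a finite limit as $h \to \infty$ — namely to $\frac{1}{p^{2}-1} - \frac{1}{2}$ in the $+$ case, with a symmetric computation producing an analogous constant in the $-$ case. The value is independent of $j$, since multiplying the argument by $\gamma^{-j}$ does not alter the Newton copolygon of $\Phi_{m}(\gamma^{-j}(1+X))$ used in that computation. A finite sum of convergent sequences converges, so the left-hand side has a finite limit and in particular a finite $\limsup$.

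The only real point to verify is the multiplicativity of $v_{\bar{\mathbb{B}}(0, u_{h})}$ on power series, which is the classical Gauss lemma for nonarchimedean power series rings. Even if one wished to avoid this, the trivial inequality $v_{\bar{\mathbb{B}}(0, u_{h})}(fg) \geq v_{\bar{\mathbb{B}}(0, u_{h})}(f) + v_{\bar{\mathbb{B}}(0, u_{h})}(g)$ (immediate from the product formula for formal power series) already suffices to deliver the required $\limsup$ bound. I therefore do not anticipate any serious obstacle — all the analytic work is already contained in Lemma \ref{lem:30}.
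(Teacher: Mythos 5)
Your main argument is correct and is essentially the paper's own: both proofs use multiplicativity of the Gauss norm $v_{\bar{\mathbb{B}}(0,u_h)}$ to turn $v_{\bar{\mathbb{B}}(0,u_h)}(\mathrm{log}^{\pm}_{\Pi})$ into the sum $\sum_{j\in\mathrm{Crit}(\Pi)} v_{\bar{\mathbb{B}}(0,u_h)}(\mathrm{log}^{\pm}_{p,j})$, and then feed in the explicit expression from the proof of Lemma \ref{lem:30}, noting it is independent of $j$ and converges as $h\to\infty$. The paper collapses this to a one-line identity $v_{\bar{\mathbb{B}}(0,u_h)}(\mathrm{log}^{+}_{\Pi}) + \frac{\#\mathrm{Crit}(\Pi)}{2}h = \#\mathrm{Crit}(\Pi)\bigl(\frac{p^{-(h+1)}-1}{1-p^2}-\frac{1}{2}\bigr)$; yours spells out the same decomposition term by term.

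One thing to correct, though. Your closing remark — that the ``trivial'' inequality $v_{\bar{\mathbb{B}}(0,u_h)}(fg) \geq v_{\bar{\mathbb{B}}(0,u_h)}(f) + v_{\bar{\mathbb{B}}(0,u_h)}(g)$ would already suffice, so one could avoid invoking full multiplicativity — has the inequality pointing the wrong way. That direction only gives
$$
v_{\bar{\mathbb{B}}(0,u_h)}(\mathrm{log}^{\pm}_{\Pi}) + \tfrac{\#\mathrm{Crit}(\Pi)}{2}h \;\geq\; \sum_{j\in\mathrm{Crit}(\Pi)}\Bigl(v_{\bar{\mathbb{B}}(0,u_h)}(\mathrm{log}^{\pm}_{p,j}) + \tfrac{h}{2}\Bigr),
$$
which is a \emph{lower} bound, whereas the lemma asserts the $\limsup$ is finite, i.e.\ an \emph{upper} bound. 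To bound the quantity from above you need $v(fg) \leq v(f)+v(g)$, which is precisely the nontrivial content of Gauss's lemma for the Gauss norm. So the multiplicativity step is not actually optional here; fortunately your main argument already uses it correctly.
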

\begin{proof}
It follows from the proof of Lemma \ref{lem:30} and the multiplicativity of $ v_{\bar{\mathbb{B}}(0, u_{h})}$ that 
$$
    v_{\bar{\mathbb{B}}(0, u_{h})}(\mathrm{log}^{+}_{\Pi}) + \frac{\#\mathrm{Crit}(\Pi)}{2}h = \#\mathrm{Crit}(\Pi)\left (  \frac{p^{-(h + 1)} - 1}{1 - p^{2}} - \frac{1}{2} \right).
$$
The right side converges as $h \to \infty$ so the $\mathrm{\limsup}$ is finite. A similar argument works for $\mathrm{log}^{-}_{\Pi}$.
\end{proof}
It follows from the above discussion and \cite[4.7]{lazard} that  $\mathrm{log}^{\pm}_{\Pi}(X)$ divides $\mathscr{M}(G^{\pm})$, and we define $\pm$-$L$-functions $L^{\pm}_{p}(X)$ to be the elements of $\mathcal{O}(\mathcal{W})$ satisfying
$$
    \mathscr{M}(G^{\pm}) = \mathrm{log}_{\Pi}^{\pm}(X) \cdot L^{\pm}_{p}(X).
$$
We write $L^{\pm}_{p}$ for the distribution $\mathscr{M}^{-1}(L_{p}^{\pm}(X))$.
\begin{proposition}
$$
    L^{\pm}_{p} \in \mathscr{D}^{r - \#\mathrm{Crit}(\Pi)/2}(\mathbb{Z}_{p}^{\times}, \mathbb{C}_{p}).
$$
\end{proposition}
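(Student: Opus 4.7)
The plan is to argue on each rigid-analytic connected component of $\mathcal{W}(\mathbb{C}_p)$ separately. On any such component we identify the restriction of $\mathscr{M}(G^{\pm})$ with an element of $\mathscr{R}^{+}_{r}$ (since $G^{\pm} \in \mathscr{D}^{r}(\mathbb{Z}_{p}^{\times},\mathbb{C}_{p})$ by construction), and the restriction of $\log_{\Pi}^{\pm}$ with an element of $\mathscr{R}^{+}_{\#\mathrm{Crit}(\Pi)/2}$. The defining relation $\mathscr{M}(G^{\pm}) = \log_{\Pi}^{\pm} \cdot L_{p}^{\pm}$ then expresses the component of $L_{p}^{\pm}$ as a quotient of two such power series, and the task reduces to showing that this quotient lies in $\mathscr{R}^{+}_{r - \#\mathrm{Crit}(\Pi)/2}$. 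By Lemma \ref{lem:29} this is equivalent to showing that
\[
\inf_{h \in \mathbb{Z}_{\geq 0}}\Bigl(v_{\bar{\mathbb{B}}(0,u_{h})}(L_{p}^{\pm}) + \bigl(r - \#\mathrm{Crit}(\Pi)/2\bigr)h\Bigr) > -\infty.
\]

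The key input is that $v_{\bar{\mathbb{B}}(0,u_{h})}$ is multiplicative on power series (the standard Gauss-norm property, used implicitly in the proof of Lemma \ref{lem:28}). This gives
\[
v_{\bar{\mathbb{B}}(0,u_{h})}(L_{p}^{\pm}) = v_{\bar{\mathbb{B}}(0,u_{h})}(\mathscr{M}(G^{\pm})) - v_{\bar{\mathbb{B}}(0,u_{h})}(\log_{\Pi}^{\pm}),
\]
so after adding $(r - \#\mathrm{Crit}(\Pi)/2)h$ we may rewrite the relevant expression as
\[
\Bigl(v_{\bar{\mathbb{B}}(0,u_{h})}(\mathscr{M}(G^{\pm})) + rh\Bigr) - \Bigl(v_{\bar{\mathbb{B}}(0,u_{h})}(\log_{\Pi}^{\pm}) + \tfrac{\#\mathrm{Crit}(\Pi)}{2}h\Bigr).
\]
Using the trivial bound $\inf_{h}(a_{h} - b_{h}) \geq \inf_{h} a_{h} - \sup_{h} b_{h}$, it suffices to show that the first bracket is bounded below in $h$ and the second is bounded above in $h$.

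The lower bound on the first bracket is immediate from Lemma \ref{lem:29} applied to $\mathscr{M}(G^{\pm}) \in \mathscr{R}^{+}_{r}$, which is the defining property of $v_{r}$. The upper bound on the second bracket is precisely the content of Lemma \ref{lem:28}, whose proof in fact exhibits the expression as a convergent sequence in $h$ rather than just a $\limsup$. Combining these two estimates yields the required finite lower bound, completing the verification on each component and hence proving $L_{p}^{\pm} \in \mathscr{D}^{r - \#\mathrm{Crit}(\Pi)/2}(\mathbb{Z}_{p}^{\times},\mathbb{C}_{p})$.

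There is no real obstacle here: the argument is essentially a bookkeeping exercise with the Banach-space valuations $v_{r}$, relying on the multiplicativity of the auxiliary valuation $v_{\bar{\mathbb{B}}(0,u_{h})}$ and on Lemma \ref{lem:28}, which was proved in precisely the form needed for this step. The only point requiring minor care is that division by $\log_{\Pi}^{\pm}$ is legal in the algebra of rigid-analytic functions on the component --- this was justified in the paragraph preceding the proposition via the divisibility statement from \cite[4.7]{lazard}, so $L_{p}^{\pm}(X)$ is a genuine element of $\mathcal{O}(\mathcal{W})$ to which the power-series estimates above apply.
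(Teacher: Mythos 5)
Your proposal is correct and follows essentially the same route as the paper's own proof: decompose $v_{\bar{\mathbb{B}}(0,u_h)}(L_p^{\pm}) + (r - \#\mathrm{Crit}(\Pi)/2)h$ via the multiplicativity (additivity of valuation) of $v_{\bar{\mathbb{B}}(0,u_h)}$ into the $G^{\pm}$ term minus the $\log_{\Pi}^{\pm}$ term, then bound the first below using $G^{\pm} \in \mathscr{D}^{r}$ and the second above using Lemma \ref{lem:28}, and conclude via Lemma \ref{lem:29}. The only cosmetic difference is that the paper phrases the bounds with $\liminf/\limsup$ while you use $\inf/\sup$, but these coincide here since the relevant sequence takes finite values for every $h$.
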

\begin{proof}
We note that 
$$
    \liminf_{h} \left(-v_{\bar{\mathbb{B}}(0, u_{h})}(\mathrm{log}^{\pm}_{\Pi}) - \tfrac{\#\mathrm{Crit}(\Pi)}{2}h \right) = -\limsup \left( v_{\bar{\mathbb{B}}(0, u_{h})}(\mathrm{log}^{\pm}_{\Pi}) + \tfrac{\#\mathrm{Crit}(\Pi)}{2}h \right) > -\infty.
$$
By the additivity of $v_{\bar{\mathbb{B}}(0, u_{h})}$ (\cite[Proposition I.4.2]{colmez}) we have
$$
    v_{\bar{\mathbb{B}}(0, u_{h})}(L^{\pm}_{p}) + (r - \tfrac{\#\mathrm{Crit}(\Pi)}{2})h =  v_{\bar{\mathbb{B}}(0, u_{h})}(G^{\pm}) + rh -  v_{\bar{\mathbb{B}}(0, u_{h})}(\mathrm{log}^{\pm}_{\Pi}) - \tfrac{\#\mathrm{Crit}(\Pi)}{2}h),
$$
whence 
$$
    \liminf  v_{\bar{\mathbb{B}}(0, u_{h})}(L^{\pm}_{p}) + (r - \tfrac{\#\mathrm{Crit}(\Pi)}{2})h > -\infty 
$$
and so by Lemma \ref{lem:29} we are done. 
\end{proof}
In particular, in the minimal slope case $r = \frac{\#\mathrm{Crit}(\Pi)}{2}$, we get two bounded distributions.

\subsection{Non-vanishing of twists}
We use $L_{p}^{\pm}$ to show non-vanishing of the complex $L$-function of $\Pi$ at the central value, extending work of Dimitrov, Januszewski, Raghuram \cite{dimitrov} to a non-ordinary setting. 
\begin{proposition}\label{prop:3}
In the case that $\mathrm{Crit}(\Pi) \neq \{ w/2\},$ we have
$$
    L_{p}^{\pm} \neq 0.
$$
\end{proposition}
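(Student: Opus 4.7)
I argue by contradiction, treating the $+$ case (the $-$ case is symmetric). Suppose $L_{p}^{+} = 0$. The power series $\log_{\Pi}^{+}$ is a non-zero element of $\mathscr{R}^{+}_{\#\mathrm{Crit}(\Pi)/2}$, being a finite product of non-zero factors by Lemma \ref{lem:30}, so the defining relation $\mathscr{M}(G^{+}) = \log_{\Pi}^{+}(X) \cdot L_{p}^{+}(X)$ forces $\mathscr{M}(G^{+}) \equiv 0$, hence $G^{+} = 0$ as a distribution. Equivalently, $L_{p}^{(\alpha)} = -L_{p}^{(\beta)}$.

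Next I pair this identity against the locally analytic characters $x^{j}\theta(x)$ for $j \in \mathrm{Crit}(\Pi)$ and $\theta$ a Dirichlet character of even $p$-power conductor $p^{m}$, $m \geq 2$. Since $(-1)^{m} = 1$, the interpolation formulae \eqref{eq:4} for $L_{p}^{(\alpha)}$ and $L_{p}^{(\beta)}$ give the same non-zero multiple of $L(\Pi \otimes \theta, j + 1/2)$, so adding them and using $L_{p}^{(\alpha)} + L_{p}^{(\beta)} = 0$ forces
\[
    L(\Pi \otimes \theta, j + 1/2) = 0 \quad \text{for every } j \in \mathrm{Crit}(\Pi) \text{ and every } \theta \text{ of even } p\text{-power conductor}.
\]
By the hypothesis $\mathrm{Crit}(\Pi) \neq \{w/2\}$, I can pick $j_{0} \in \mathrm{Crit}(\Pi)$ with $j_{0} \neq w/2$, so that $j_{0} + 1/2$ is a \emph{non-central} critical point of $L(s, \Pi)$. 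The displayed statement then asserts that every twist of $L(\Pi, s)$ by a Dirichlet character of even $p$-power conductor vanishes at $s = j_{0} + 1/2$, contradicting the standard non-vanishing of twisted cuspidal automorphic $L$-functions at non-central critical points. The $-$ case runs through identically with odd conductor in place of even.

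The main obstacle is the final appeal: one needs to know that $L(\Pi \otimes \theta, j_{0} + 1/2) \neq 0$ for at least one Dirichlet character $\theta$ of even (respectively odd) $p$-power conductor, which is the $\mathrm{GL}_{2n}$ analogue of Rohrlich's non-vanishing result. Only a single non-vanishing twist is required, and it is precisely the hypothesis $\mathrm{Crit}(\Pi) \neq \{w/2\}$ that makes such an input accessible: if $\mathrm{Crit}(\Pi) = \{w/2\}$ the argument would only force vanishing at the centre $s = (w+1)/2$ of the functional equation, where genuine sign-theoretic obstructions can cause every twist to vanish and no contradiction is available.
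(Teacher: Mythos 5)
Your argument is correct and is essentially the same as the paper's: one relates $L_{p}^{\pm}$ to the twisted complex $L$-values via the interpolation formula, then invokes non-vanishing of $L(\Pi \otimes \theta, j_{0} + 1/2)$ at a non-central critical point $j_{0} \neq w/2$, which is exactly where the hypothesis $\mathrm{Crit}(\Pi) \neq \{w/2\}$ enters. The only differences are cosmetic and one point of imprecision: the paper argues directly (shows $L_{p}^{+}$ is non-zero at one point) rather than by contradiction via $G^{+} = 0$, and it pins down the non-vanishing input as Jacquet--Shalika \cite[1.3]{jacquetShalika} — non-vanishing for $\mathfrak{Re}(s) \geq w/2 + 1$, hence by the functional equation also for $\mathfrak{Re}(s) \leq w/2$, which covers \emph{every} twist at \emph{every} non-central critical point unconditionally — so the input you hedge about as the ``main obstacle,'' calling it a $\mathrm{GL}_{2n}$ analogue of Rohrlich, is actually a much easier and completely standard result; Rohrlich-type theorems are only needed at the centre, which is precisely the case the hypothesis excludes.
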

 \begin{proof}
 We consider $L_{p}^{+}$, the case of $L_{p}^{-}$ being essentially identical. Note that a character $\theta$ of conductor $p^{m}$ is uniquely determined by a choice of primitive $p^{m}$th root of unity $\zeta_{p^{m}}$ which gives us the identification 
 $$
    \mathrm{log}^{+}_{\Pi}(\gamma^{j}\zeta_{p^{m}} - 1) = \int_{\mathbb{Z}_{p}^{*}}x^{j}\theta(x)\mathrm{log}^{+}_{\Pi}(x),
 $$
 where on the right hands side we use the description of $\mathrm{log}^{+}_{\Pi}$ as a distribution and on the left hand side we use the description as a power series. It follows from Lemma \ref{lem:30} that $\int_{\mathbb{Z}_{p}^{*}}x^{j}\theta(x)\mathrm{log}^{+}_{\Pi}(x) \neq 0$ when $m$ is odd.
 Thus for characters $\theta$ of even $p$-power conductor we have the interpolation property
 $$
    \int_{\mathbb{Z}_{p}^{\times}}x^{j}\theta(x)L_{p}^{+}(x) \sim \frac{L(\Pi\otimes\theta, j + 1/2)}{\int_{\mathbb{Z}_{p}^{\times}}x^{j}\theta(x)\mathrm{log}_{\Pi}^{+}(x)}, \ j \in \mathrm{Crit}(\Pi),
 $$
 where $\sim$ is used here to mean `up to non-zero constant'.
 By Jacquet-Shalika \cite[1.3]{jacquetShalika} we have
 $$
    L(s, \Pi \circ \theta) \neq 0 \ \text{for} \ \mathfrak{Re}(s) \geq w/2 + 1
$$
 for finite order characters $\theta$, and by applying the functional equation we get non-vanishing for $\mathfrak{Re}(s) \leq w/2$. Since $\mathrm{Crit}(\Pi)$ contains an integer $m$ not equal to $w/2$, the above discussion gives us
 $$
    L(\Pi, m + 1/2) \neq 0,
 $$
 and thus $L^{+}_{p} \neq 0$.
 \end{proof}
 
 \begin{remark} \label{rmk:1}
Proposition \ref{prop:3} actually proves the stronger result that the power series $\mathscr{M}(L_{p}^{\pm})\vert_{\mathbb{B}_{\psi}}$ is non-zero for each choice of $\psi$.  
 \end{remark}
We can turn this back on itself and use $L^{\pm}_{p}$ to say something about nonvanishing of $L(\Pi \otimes \theta, (\omega + 1)/2)$ in the case when $L_{p}^{\pm} \in \mathscr{D}^{0}(\mathbb{Z}_{p}^{\times}, \mathbb{C}_{p})$.

\begin{theorem}
In the case that $L_{p}^{\pm}$ are bounded distributions, $w$ is even, and $\mathrm{Crit}(\Pi) \neq \{w/2\}$, we have
$$
    L(\Pi \otimes \theta, (w + 1)/2) \neq 0
$$
for all but finitely many characters $\theta$ of $p$-power conductor.
\end{theorem}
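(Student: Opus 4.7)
The plan is to combine the interpolation property of $L_p^\pm$ at the central integer $j = w/2$ with a $p$-adic Weierstrass preparation argument for bounded power series. The first point to establish is that $w/2 \in \mathrm{Crit}(\Pi)$: since $w$ is even, the integer $w/2$ makes sense, and purity ($\mu_n + \mu_{n+1} = w$) together with dominance forces $\mu_n \geq w/2 \geq \mu_{n+1}$, so indeed $w/2 \in \mathrm{Crit}(\Pi)$. The hypothesis $\mathrm{Crit}(\Pi) \neq \{w/2\}$ then lets me invoke Proposition \ref{prop:3} and Remark \ref{rmk:1} to conclude that $\mathscr{M}(L_p^\pm)$ restricts to a non-zero power series on each connected component $\mathbb{B}_\psi$ of $\mathcal{W}(\mathbb{C}_p)$.

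Next, for any finite-order $\theta$ of $p$-power conductor, the character $x^{w/2}\theta$ corresponds to a point $\gamma^{w/2}\zeta_{p^m} - 1$ lying in the component $\mathbb{B}_\psi$ with $\psi = \theta\vert_{(\mathbb{Z}/p)^\times}$. Re-running the interpolation derivation from the proof of Proposition \ref{prop:3} with $j = w/2$, I obtain, for a sign $\epsilon = \epsilon(\theta) \in \{+, -\}$ determined by the parity of the conductor of $\theta$,
$$
    \mathscr{M}(L_p^{\epsilon})(\gamma^{w/2}\zeta_{p^m} - 1) \sim \frac{L(\Pi \otimes \theta, (w+1)/2)}{\mathrm{log}_\Pi^{\epsilon}(\gamma^{w/2}\zeta_{p^m} - 1)},
$$
with non-zero denominator. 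In particular, if the central $L$-value vanishes, then $\gamma^{w/2}\zeta_{p^m}-1$ must be a zero of $\mathscr{M}(L_p^{\epsilon})\vert_{\mathbb{B}_\psi}$.

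The boundedness hypothesis places $\mathscr{M}(L_p^\pm)\vert_{\mathbb{B}_\psi}$ in $\mathscr{R}^+_0$. Working over a sufficiently large finite extension $L/\mathbb{Q}_p$ containing the values of $L_p^\pm$ and rescaling, these become elements of $\mathcal{O}_L[[X]]$, so $p$-adic Weierstrass preparation gives a factorization as a unit times a distinguished polynomial times a power of a uniformizer, hence only finitely many zeros in the open unit disk $\mathbb{B}(0,0)$. Since there are only $p-1$ components $\mathbb{B}_\psi$, this bounds the number of bad $\theta$ by a finite constant, completing the argument. The step I expect to be the main obstacle is the bookkeeping in the interpolation formula above: one must verify that for each $\theta$ the sign $\epsilon(\theta)$ can be chosen so that $\mathrm{log}_\Pi^{\epsilon}$ is non-zero at the corresponding weight-space point, otherwise the formula degenerates to $0/0$ and extracts no information about $L$-value vanishing. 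This is resolved by Lemma \ref{lem:30}, together with the observation that $\gamma^{w/2 - j}$ is a non-trivial element of $1 + p\mathbb{Z}_p$, hence not a root of unity for any $j \in \mathrm{Crit}(\Pi) \setminus \{w/2\}$; consequently only the factor $\mathrm{log}_{p, w/2}^{\epsilon}$ of $\mathrm{log}_\Pi^{\epsilon}$ can vanish at $\gamma^{w/2}\zeta_{p^m} - 1$, and this occurs for precisely one parity of $m$.
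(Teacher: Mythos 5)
Your proof is correct and follows the same strategy as the paper: non-vanishing of $\mathscr{M}(L_p^\pm)\vert_{\mathbb{B}_\psi}$ from Proposition \ref{prop:3} and Remark \ref{rmk:1}, $p$-adic Weierstrass preparation to bound the zero set, and the interpolation formula at $j = w/2$ to transfer this to the complex $L$-values. You supply two details the paper leaves implicit---that purity and dominance put $w/2$ in $\mathrm{Crit}(\Pi)$, and that only the $j = w/2$ factor of $\mathrm{log}^{\pm}_{\Pi}$ can vanish at the relevant points so the correct sign always yields a non-degenerate interpolation formula---which are worthwhile clarifications but do not change the argument.
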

\begin{proof}
For any character $\psi$ of $(\mathbb{Z}/p\mathbb{Z})^{\times}$ we can write $\mathscr{M}(L^{\pm}_{p})\vert_{\mathbb{B}_{\psi}} = L^{\pm}_{p}(\psi, T) \in \mathcal{O}_{L}[[T]] \otimes_{\mathcal{O}_{L}} L$ for some finite extension $L/\mathbb{Q}_{p}$. This power series is non-zero by Proposition \ref{prop:3} and Remark \ref{rmk:1}, and so Weierstrass preparation tells us that each $L^{\pm}(\psi, T)$, and thus $L_{p}^{\pm}$, has only finitely many zeroes. Given any character $\theta$ of $p$-power conductor, we have
 $$
  \int_{\mathbb{Z}_{p}^{\times}} x^{w/2}\theta(x) L_{p}^{?}(x) \sim L(\Pi\otimes \theta, (w + 1)/2),
 $$
where $? \in \{+, -\}$ depends on the conductor of $\theta$ in the usual way.
Thus, for all but finitely many $\theta$, we have
$$
    L(\Pi \otimes \theta, (w + 1)/2) \neq 0.
$$
\end{proof}

\bibliography{PlusMinusLfunc}
\bibliographystyle{spmpsci}

\end{document}